\newtheorem{thm}{Theorem}[section]
\newtheorem{claim}[thm]{Claim}
\newtheorem{lemma}[thm]{Lemma}
\theoremstyle{definition}
\def\rquotient#1#2{%
	\makeatletter
	\raise.3ex\hbox{$#1$}/\lower.3ex\hbox{$#2$}%
	\makeatother
}	
\newcommand{\subjclass}[2][2010]{%
	\let\@oldtitle\@title%
	\gdef\@title{\@oldtitle\footnotetext{#1 \emph{Mathematics subject classification.} #2}}%
}
\newcommand{\keywords}[1]{%
	\let\@@oldtitle\@title%
	\gdef\@title{\@@oldtitle\footnotetext{\emph{Key words and phrases.} #1.}}%
}
\newcommand{\Address}{{
		\bigskip
		\small
		
		\textsc{D\'epartement de Math\'ematiques B\^atiment 307, Facult\'e des Sciences d'Orsay, Universit\'e Paris-Sud, F-91405 Orsay Cedex, France.}\par\nopagebreak
		\textit{E-mail address}: \texttt{anthony.genevois@math.u-psud.fr}
		
}}
\title{CAT(0) cube complexes with flat hyperplanes}
\date{\today}
\author{Anthony Genevois}
\subjclass{Primary 20F65. Secondary 20F67.}
\keywords{CAT(0) cube complexes, hyperplanes}
\begin{document}

\maketitle

\begin{abstract}
In this short note, we show that a group acting geometrically on a CAT(0) cube complex with virtually abelian hyperplane-stabilisers must decompose virtually as a free product of free abelian groups and surface groups. 
\end{abstract}


\section{Introduction}

\noindent
CAT(0) cube complexes first appeared in the monograph \cite{MR919829} as a convenience source of CAT(0) and CAT(-1) spaces, leading to the construction of nonpositively curved spaces on which classical families of groups act (e.g. \cite{MR1303028, MR1389635}) but also to the construction of exotic groups \cite{MR2694733, BurgerMozes}. However, the strength of CAT(0) cube complexes really arose after the recognition of the fundamental role played by \emph{hyperplanes}. This role is twofold. Firstly, it turns out that the geometry of CAT(0) cube complexes essentially reduces to the combinatorics of their hyperplanes \cite{MR1347406}, a point of view which provides powerful tools to attack various questions including Hilbertian geometry \cite{Haagerup, MR1459140, MR2132393}, finiteness properties \cite{MR1465330}, biautomaticity \cite{MR1604899}, Tits' alternative \cite{MR2164832, MR2827012}, separability properties \cite{MR2377497}, the flat closing conjecture \cite{PeriodicFlatsCC, NTY, SpecialHyp, RHspecial}, the rank one rigidity conjecture \cite{MR2827012}. Secondly, CAT(0) cube complexes can be reconstructed from their hyperplanes, leading to easy constructions of CAT(0) cube complexes from \emph{cubulations} of \emph{pocsets} and \emph{spaces with walls} \cite{MR1347406, Roller, MR1668359, MR2197811, MR2059193}. Such constructions allow us to prove that many groups naturally act on CAT(0) cube complexes, including many Artin groups \cite{MR1303028, MR2995171, MR3993762}, graph braid groups \cite{GraphBraidGroups}, Coxeter groups \cite{MR1983376}, small cancellation groups \cite{MR2053602, MR3346927, MR3711135}, Thompson's groups \cite{MR1978047, MR2136028}, random groups \cite{MR2806688, MR3846337}, many 3-manifold groups \cite{MR2931226, MR3217626, MR3358051, MR3758147, MR3874647}, one-relator groups with torsion \cite{MR3118410}, many free-by-cyclic groups \cite{MR3320891, MR3513573}, some Burnside groups \cite{MR3786300}, Cremona groups \cite{CremonaCC}. As a consequence, looking for an action on a CAT(0) cube complex is a useful geometric strategy in order to study a given group, but it also has applications in other areas of mathematics, most famously in low-dimensional topology \cite{MR3104553}. More recently, coarse geometries inspired by CAT(0) cube complexes and their connections with mapping class groups of closed surfaces received a lot of attention (see \cite{Coarsemedian, MR3650081, MR3956144} and their subsequent developments). 

\medskip \noindent
In this short note, we investigate the following natural question: what can be said about a group acting geometrically on a CAT(0) cube complex from the structure of its hyperplane-stabilisers? Notice that, if the cube complexes under consideration are two-dimensional, then hyperplane-stabilisers must be virtually free. Such examples include groups with quite different behaviors, for instance some right-angled Artin or Coxeter groups, some hyperbolic small cancellation groups and some simple groups. Therefore, it seems more reasonable to consider the case where hyperplane-stabilisers are \emph{small} (i.e. they do not contain non-abelian free subgroups), or equivalently, as a consequence of the Tits alternative proved in \cite{MR2164832}, where hyperplane-stabilisers are virtually abelian. 

\medskip \noindent
This question first appears in \cite[Conjecture 14.11]{WisePreprint}, where the author conjectures that, if $X$ is a nonpositively curved cube complex of finite dimension all of whose hyperplanes have virtually abelian fundamental groups, then $\pi_1(X)$ must be virtually abelian or (virtually abelian)-by-(non-elementary quasifuchsian). Our main theorem answers Wise's question under the stronger assumption that the action is cocompact but allowing torsion in the group.

\begin{thm}\label{thm:Main}
Let $G$ be a group acting geometrically on a CAT(0) cube complex. If hyperplane-stabilisers are all virtually abelian, then $G$ virtually decomposes as a free product of free abelian groups and surface groups.
\end{thm}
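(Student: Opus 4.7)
The plan is to combine a Stallings--Dunwoody decomposition of $G$ (which produces the free product structure) with a rank-rigidity analysis of the resulting one-ended pieces (which identifies them as virtually free abelian or virtually surface).

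First I would record the key geometric consequence of the hypothesis: each hyperplane $H$ of $X$ is itself a CAT(0) cube complex, its stabiliser $\mathrm{Stab}_G(H)$ acts on it cocompactly, and since this stabiliser is virtually abelian, $H$ is quasi-isometric to some Euclidean space $\mathbb{R}^k$. This ``flatness'' of hyperplanes --- the phenomenon named in the title --- is what the rest of the argument will exploit, both to limit how hyperplanes can cross and to enable an induction on dimension.

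Because $G$ is finitely presented (acting geometrically on a finite-dimensional contractible cube complex), Dunwoody's accessibility theorem produces a virtual free product decomposition $G' = F \ast A_1 \ast \cdots \ast A_r$, with $F$ finitely generated free and each $A_i$ one-ended. I would verify that each $A_i$ inherits a geometric action on a CAT(0) cube complex with virtually abelian hyperplane stabilisers, for instance by passing to an $A_i$-invariant convex subcomplex of $X$ dual to a finite collection of separating hyperplanes.

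The heart of the argument is then to show that each one-ended factor $A_i$ is virtually $\mathbb{Z}^n$ or virtually a closed surface group. After reducing to an essential action on some $X_i$, Caprace--Sageev rank rigidity gives a dichotomy. If $X_i$ splits non-trivially as a product $Y_1 \times Y_2$, the hyperplanes of the form $H_1 \times Y_2$ and $Y_1 \times H_2$ force both direct factors of $A_i$ to be virtually abelian, so iteration yields $A_i$ virtually $\mathbb{Z}^n$. Otherwise $A_i$ contains a rank-one isometry, and one argues that a $\mathbb{Z}^2 \leq A_i$ would preserve a flat $F$, whereupon the $\mathbb{Z}$-stabilised axes of hyperplanes meeting $F$ together with the rank-one element would contradict either irreducibility or the virtually abelian stabiliser hypothesis. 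Hence $A_i$ is hyperbolic, and a one-ended hyperbolic group acting geometrically on a CAT(0) cube complex with virtually cyclic hyperplane stabilisers is virtually a closed surface group, via its Bowditch/Rips--Sela JSJ decomposition over virtually cyclic subgroups combined with the flat hyperplane structure to eliminate rigid vertex groups.

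I expect the main obstacle to lie in this last step: both the elimination of $\mathbb{Z}^2$ subgroups in the irreducible, rank-one case, and the JSJ analysis of hyperbolic $A_i$, require careful bookkeeping of how the virtually abelian hyperplane stabilisers sit inside flats, axes, and JSJ vertex groups. The interplay between crossing patterns of hyperplanes meeting a flat and the algebra of their stabilisers is where most of the technical work should concentrate.
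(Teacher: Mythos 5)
Your high-level architecture (a free product decomposition over finite/trivial groups followed by a rank-rigidity dichotomy for the one-ended pieces) matches the paper's, but the two steps where you locate "the main obstacle" are precisely where your proposed route breaks down, and the paper takes a genuinely different path there. First, your deduction "no $\mathbb{Z}^2$ in $A_i$, hence $A_i$ is hyperbolic" silently invokes the flat closing conjecture for CAT(0) cube complexes, which is not available in the generality you need (the paper's introduction cites it as an active open problem). The paper never argues this way: in the non-hyperbolic rank-one case it takes an isometrically embedded combinatorial flat $\mathbb{R}^2 \to X$ (which exists by a hyperbolicity criterion for cocompact cube complexes), uses the connectivity of the crossing graph together with the fact that hyperplanes are quasi-lines to show the convex hull of this flat is all of $X$, and deduces that $X$ is \emph{reducible} -- so the group is virtually abelian without ever producing a $\mathbb{Z}^2$ subgroup first. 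Second, your plan to identify the hyperbolic one-ended pieces as virtually closed surface groups "via the JSJ decomposition, eliminating rigid vertex groups using the flat hyperplane structure" is not an argument: a virtually cyclic hyperplane stabiliser is a codimension-one subgroup, but Sageev's construction gives an action on a cube complex, not a splitting over a two-ended subgroup, so you have no mechanism for showing rigid vertices cannot occur. The paper instead uses Bonk--Kleiner to embed a quasiconvex copy of $\mathbb{H}^2$, shows its convex hull is quasi-dense (again via the quasi-line structure of hyperplanes and connectivity of the crossing graph), and concludes via the convergence group theorem that $G$ is virtually a surface group.

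Two further points. You are missing the reduction, via panel collapse, to an essential and hyperplane-essential action; without it the cubical flat torus theorem only gives that hyperplanes are coarsely Euclidean, whereas the paper needs them to be genuine products of unbounded quasi-lines so that it can define the \emph{rank} of a hyperplane and prove that transverse hyperplanes have equal rank -- this invariant is the engine of the whole case analysis and is absent from your outline. And your claim that each one-ended Stallings--Dunwoody factor $A_i$ acts geometrically on a convex subcomplex "dual to a finite collection of separating hyperplanes" needs justification (free factors need not have quasiconvex orbits in an arbitrary model space); the paper avoids this by realising the splitting geometrically, decomposing the essentialised complex along its cut vertices and checking that the resulting pieces have connected crossing graphs.
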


\noindent
Here, surface groups refer to fundamental groups of orientable surfaces (possibly with boundary), so they correspond to free groups and fundamental groups of closed surfaces. 

\medskip \noindent
Theorem \ref{thm:Main} is proved as follows. Given a group $G$ acting geometrically on a CAT(0) cube complex $X$ with virtually abelian hyperplane-stabilisers, the starting point is to simplify the cubulation following \cite{MR4033506}. As a consequence, we can suppose that the hyperplanes in $X$ decompose as products of unbounded quasi-lines. The number of such factors is referred to as the \emph{rank} of the hyperplane. An elementary observation is that two transverse hyperplanes must have the same rank. As a consequence, if we assume that $G$ is one-ended, all the hyperplanes in $X$ have the same rank $r \geq 0$.
\begin{itemize}
	\item If $r=0$, then $X$ is a tree and $G$ is virtually free. 
	\item If $r \geq 2$, then we show that any two non-transverse hyperplanes are transverse to a common hyperplane. It follows from the rank one rigidity \cite{MR2827012} that $X$ decomposes as a product $X_1 \times \cdots \times X_n$, $n\geq 2$. Since $X$ contains hyperplanes isomorphic to $X_1 \times \cdots \times X_{n-1}$ and $X_2 \times \cdots \times X_n$, it follows that $X_1, \ldots, X_n$ are quasi-lines and that $n=r+1$. The conclusion is that $G$ contains a finite-index subgroup isomorphic to $\mathbb{Z}^{r+1}$.
	\item Finally, if $r=1$, we distinguish two cases. Either $X$ is hyperbolic, and we show that $X$ is quasi-isometric to the hyperbolic plane and that $G$ is virtually a surface group. Or $X$ is not hyperbolic, and we show that $X$ is quasi-isometric to a the Euclidean plane and that $G$ contains a finite-index subgroup isomorphic to $\mathbb{Z}^2$.
\end{itemize}
The conclusion in the latter case is not surprising if we assume that the hyperplanes are not only quasi-lines but geodesic lines. Then it is easy to verify that, in the link of a vertex, no vertex can have degree $\geq 3$ (otherwise we find a hyperplane containing a branching point) and that each vertex has degree $\geq 2$ (otherwise we find a hyperplane with a leaf). In other words, the link of each vertex must be a cycle, which implies that $X$ is a square tessellation of the plane. Then two cases happen: either $X$ is isomorphic to $\mathbb{R}^2$ (endowed with its canonical cubulation) or $X$ contains a vertex of degree $\geq 5$ and it is quasi-isometric to the hyperbolic plane. In full generality, when the hyperplanes of $X$ are only quasi-lines, it follows from the fact that $X$ is one-ended that $\mathbb{R}^2$ or $\mathbb{H}^2$ quasi-isometrically embed into $X$, and we show that the image of such an embedding is necessarily quasi-dense.

\paragraph{Acknowledgements.} This work was supported by a public grant as part of the Fondation Math\'ematique Jacques Hadamard.

\section{Proof of the theorem}

\noindent
We assume that the reader is familiar with CAT(0) cube complexes. We refer to \cite{MR2986461, SageevCAT(0)} for more information. We emphasize that, in the following, hyperplanes are thought of as CAT(0) cube complexes on their own and sometimes as subcomplexes. Formally, this can be justified by noticing that a hyperplane becomes a convex subcomplex in the barycentric subdivision of the cube complex under consideration.

\medskip \noindent
Before turning to the proof of Theorem \ref{thm:Main}, we record the following observation:

\begin{lemma}\label{lem:Crossing}
Let $X$ be a CAT(0) cube complex. There exist convex subcomplexes $\{X_i \mid i \in I\}$ such that:
\begin{itemize}
	\item for all distinct $i,j \in I$, $X_i \cap X_j$ is either empty or a cut vertex;
	\item for every $i \in I$, the crossing graph of $X_i$ is connected and the hyperplanes crossing $X_i$ span a connected component of the crossing graph of $X$.
\end{itemize}
\end{lemma}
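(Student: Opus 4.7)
My plan is to construct the subcomplexes $X_i$ as convex hulls indexed by the connected components of the crossing graph of $X$, and then verify the two listed properties in turn. Let $\mathcal{H}_i$, for $i$ ranging over some index set $I$, denote the vertex sets of the connected components of the crossing graph. I would define $X_i$ to be the convex hull in $X$ of the union $\bigcup_{H \in \mathcal{H}_i} N(H)$ of the carriers of the hyperplanes of $\mathcal{H}_i$; this is automatically a convex subcomplex.

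The first task is to check that the hyperplanes of $X$ crossing $X_i$ are exactly those of $\mathcal{H}_i$. One inclusion is immediate since $N(H) \subseteq X_i$ for every $H \in \mathcal{H}_i$. For the converse, suppose a hyperplane $K \notin \mathcal{H}_i$ crosses $X_i$. Since $K$ is not transverse to any element of $\mathcal{H}_i$, each element of $\mathcal{H}_i$ lies in a single halfspace of $K$. If two of them lay on opposite sides of $K$, then a chain of successively transverse hyperplanes in $\mathcal{H}_i$ joining them would have two consecutive members on opposite sides of $K$, forcing one of them to be transverse to $K$ and contradicting $K \notin \mathcal{H}_i$. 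If on the other hand all of $\mathcal{H}_i$ sat on the same side of $K$, then $X_i$ would sit there too, contradicting that $K$ crosses $X_i$. Convexity of $X_i$ (which makes it a full subcomplex) then ensures that transversality of hyperplanes inside $X_i$ matches transversality in $X$, so the crossing graph of $X_i$ coincides with the induced subgraph on $\mathcal{H}_i$ and is therefore connected. For distinct $i, j \in I$, the intersection $X_i \cap X_j$ is a convex subcomplex whose hyperplanes would have to lie in $\mathcal{H}_i \cap \mathcal{H}_j = \emptyset$; hence $X_i \cap X_j$ has no hyperplanes, and is either empty or a single vertex.

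The final and most delicate step is to upgrade this single vertex to a cut vertex of $X$. I plan to argue via the cubical link at such a shared vertex $v$. Every edge of $X$ is dual to a hyperplane in a unique $\mathcal{H}_k$ and so lies in $X_k$ and in no other $X_l$; since the hyperplanes dual to the edges of any cube are pairwise transverse, every cube of $X$ also belongs to a single $X_k$. Consequently $\mathrm{Link}(v)$ decomposes as a disjoint union of subcomplexes indexed by those $k$ with $v \in X_k$, and the summands corresponding to $i$ and $j$ are both nonempty because each of $X_i, X_j$ is a connected convex subcomplex containing at least one hyperplane, hence having an edge incident to $v$. The main obstacle is now the topological step of inferring that $v$ genuinely separates $X$: disconnectedness of the link gives only a local separation near $v$, and a priori the pieces could reconnect away from $v$. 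I would resolve this via the CAT(0) geometry, using the fact that geodesic retraction toward $v$ exhibits a small punctured ball about $v$ as a deformation retract of $X \setminus \{v\}$, so that the number of connected components of $X \setminus \{v\}$ equals that of $\mathrm{Link}(v)$, which is at least two.
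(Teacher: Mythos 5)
Your proof is correct, but it runs in the opposite direction from the paper's. The paper starts from the set of cut vertices of $X$, takes the closures of the connected components of their complement as the $X_i$, and then invokes an external result (\cite[Lemma 2]{SplittingObstruction}: a CAT(0) cube complex not disconnected by a vertex has connected crossing graph) to get connectivity of each crossing graph; the first bullet point then holds by construction. You instead start from the connected components $\mathcal{H}_i$ of the crossing graph, take $X_i$ to be the convex hull of the union of the corresponding carriers, and verify everything directly. Your verification that the hyperplanes crossing $X_i$ are exactly $\mathcal{H}_i$ is sound (the chain argument showing all of $\mathcal{H}_i$ lies in one halfspace of any $K \notin \mathcal{H}_i$ is the right mechanism), though the claim that transversality in $X_i$ agrees with transversality in $X$ is better justified by the Helly property for halfspaces and convex subcomplexes than by ``fullness.'' The real content you take on is the cut-vertex step, which is essentially the contrapositive of the lemma the paper cites: your decomposition of $\mathrm{Lk}(v)$ into disjoint nonempty pieces (using that every positive-dimensional cube lies in a unique $X_k$) is correct, and the passage from a disconnected link to a genuine cut vertex does require contractibility of $X$ -- your geodesic retraction of $X \setminus \{v\}$ onto a small punctured ball, which is a cone on the link minus its apex, supplies exactly this (equivalently, one can run Mayer--Vietoris on $B(v,\varepsilon) \cup (X \setminus \{v\})$ using $H_1(X) = \tilde{H}_0(X) = 0$). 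The trade-off: the paper's proof is shorter because the hard implication is outsourced, while yours is self-contained but must reprove that implication; both yield the same decomposition up to minor differences in the pieces.
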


\noindent
Recall that the \emph{crossing graph} of a CAT(0) cube complex is the graph whose vertices are the hyperplanes of the cube complex under consideration and whose edges link two hyperplanes whenever they are transverse.

\begin{proof}[Proof of Lemma \ref{lem:Crossing}.]
Let $\{ x_j \mid j \in J\}$ denote the collection of all the cut vertices of $X$ and let $\{X_i \mid i \in I\}$ denote the closures of the connected components of $X \backslash \{x_j \mid j \in J\}$. Then, for all distinct $i,j \in I$, $X_i \cap X_j$ is either empty or a cut vertex; and, for every $i \in I$, $X_i$ is not disconnected by one of its vertices. The latter assertion implies, according to \cite[Lemma 2]{SplittingObstruction}, that the crossing graph of $X_i$ is connected. So the hyperplanes crossing $X_i$ lie in a single connected component of the crossing graph of $X$. Conversely, it is clear that two hyperplanes separated by a cut vertex lie in distinct connected components of the crossing graph of $X$. We conclude that the hyperplanes crossing $X_i$ span a connected component of the crossing graph of $X$.
\end{proof}

\begin{proof}[Proof of Theorem~\ref{thm:Main}.]
Assume that $G$ acts geometrically on a CAT(0) cube complex $X$ such that each hyperplane has a virtually abelian stabiliser. According to \cite[Proposition 3.5]{MR2827012} and \cite[Theorem A]{MR4033506}, we can suppose without loss of generality that the action is \emph{essential} (i.e. the orbit of a point never stays in a neighborhood of a halfspace) and \emph{hyperplane-essential} (i.e. the action on a hyperplane by its stabiliser is always essential). 

\begin{claim}\label{claim:HypProduct}
Each hyperplane is a product of unbounded quasi-lines.
\end{claim}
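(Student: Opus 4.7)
The plan is to apply the canonical product decomposition and the rank one rigidity theorem of Caprace--Sageev \cite{MR2827012} to a hyperplane $H$, regarded as a CAT(0) cube complex in its own right. Let $S := \mathrm{Stab}_G(H)$; cocompactness of the $G$-action on $X$ makes $S$ act geometrically on $H$, hyperplane-essentiality makes this action essential, and by hypothesis $S$ is virtually abelian.

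If $H$ is bounded, then essentiality precludes the existence of any halfspace of $H$, so $H$ reduces to a single vertex and is the empty product of quasi-lines. Otherwise, the canonical irreducible decomposition writes $H = H_1 \times \cdots \times H_k$ with each $H_i$ irreducible and unbounded. Canonicity implies that $S$ permutes the factors, so a finite-index subgroup of $S$ preserves each $H_i$; and the standard splitting result for cocompact essential actions on products of CAT(0) cube complexes then yields a further finite-index subgroup of $S$ of the form $S_1 \times \cdots \times S_k$, with each $S_i$ acting geometrically and essentially on $H_i$.

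On each irreducible factor $H_i$, rank one rigidity \cite{MR2827012} produces a rank-one isometry $g_i \in S_i$. However $S_i$, as a subgroup of the virtually abelian group $S$, is itself virtually $\mathbb{Z}^{n_i}$ for some $n_i \geq 0$. If $n_i \geq 2$, the flat torus theorem applied to a torsion-free abelian finite-index subgroup of $S_i$ yields an isometrically embedded $n_i$-flat in $H_i$ on which this subgroup acts by translations; some nonzero power of $g_i$ lies in this subgroup and has its axis contained in the flat, so this axis bounds a flat half-plane, contradicting the rank-one property of $g_i$. Hence $n_i \leq 1$, and since $H_i$ is unbounded with cocompact $S_i$-action we must have $n_i = 1$. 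Thus $S_i$ is virtually cyclic, and consequently $H_i$ is quasi-isometric to $\mathbb{Z}$, i.e., an unbounded quasi-line.

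The main technical point I foresee is the splitting step: upgrading a factor-preserving finite-index subgroup of $S$ to an honest direct-product splitting $S_1 \times \cdots \times S_k$ with each $S_i$ acting both geometrically and essentially on $H_i$. The factor-preserving property is immediate from the canonicity of the irreducible decomposition, but the full splitting requires invoking the appropriate result about cocompact essential actions on product CAT(0) cube complexes; once it is in hand, the rest is a clean combination of the Caprace--Sageev dichotomy with the virtually abelian hypothesis.
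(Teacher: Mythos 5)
Your route is genuinely different from the paper's: the paper disposes of this claim in three lines by quoting the cubical flat torus theorem of Wise--Woodhouse \cite[Theorem~3.6]{MR3625111}, which already asserts that a virtually abelian group acting geometrically on a CAT(0) cube complex cocompactly stabilises a convex subcomplex splitting as a product of quasi-lines; essentiality then forces the hyperplane to equal that subcomplex and the quasi-lines to be unbounded. You instead try to rederive this from the canonical irreducible decomposition plus rank one rigidity, which, if completed, would amount to reproving a special case of that theorem.

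The gap is exactly the step you flag: there is no ``standard splitting result for cocompact essential actions on products of CAT(0) cube complexes.'' Burger--Mozes lattices act geometrically, essentially and (after passing to a finite-index subgroup) factor-preservingly on products of two trees, yet are virtually simple, so no finite-index subgroup splits compatibly with the factors. What is true in general is only that the \emph{projection} $S_i$ of the factor-preserving subgroup to $\mathrm{Aut}(H_i)$ acts cocompactly and essentially on $H_i$; but this projected action need not be proper, so $S_i$ need not act \emph{geometrically}, and your subsequent appeal to the flat torus theorem (which requires a proper semisimple action of $\mathbb{Z}^{n_i}$ to produce an $n_i$-flat) does not apply to it. The splitting \emph{is} true when $S$ is virtually abelian, but proving it is essentially the content of the theorem you are trying to avoid citing, so as written the argument is incomplete at its load-bearing step. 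The rest of your sketch is sound, and it can in fact be repaired without any splitting: $S_i$ is virtually abelian (as a quotient of $S'$) and acts cocompactly and essentially on the irreducible complex $H_i$, so rank one rigidity still yields a contracting isometry $g_i$ in a finite-index abelian subgroup $A \leq S_i$; since $A$ centralises $g_i$ it preserves $\mathrm{Min}(g_i)$, which is an axis times a bounded set because $g_i$ is rank one, and cocompactness of $A$ on $H_i$ then forces $H_i$ to lie at finite Hausdorff distance from that axis, i.e.\ to be an unbounded quasi-line. With that substitution your argument goes through, but the proof as submitted rests on a false general principle.
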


\noindent
For every hyperplane $J$, $\mathrm{stab}(J)$ acts geometrically on $J$. Because $\mathrm{stab}(J)$ is virtually abelian, it follows from the cubical flat torus theorem proved in \cite[Theorem 3.6]{MR3625111} that $J$ contains a convex subcomplex which decomposes as a product of quasi-lines and on which $\mathrm{stab}(J)$ acts cocompactly. Because $\mathrm{stab}(J)$ acts on $J$ essentially, it follows that $J$ coincides with this subcomplex and that our quasi-lines are all unbounded.

\medskip \noindent
Claim~\ref{claim:HypProduct} allows us to define the \emph{rank} of a hyperplane as the number of factors in its decomposition as a product of unbounded quasi-lines.

\begin{claim}\label{claim:TransverseHyp}
Two transverse hyperplanes have the same rank.
\end{claim}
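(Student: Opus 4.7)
The plan is to compute the quasi-isometry type of $J_1 \cap J_2$ in two different ways. Given transverse hyperplanes $J_1, J_2$ of $X$, there is a classical correspondence between the hyperplanes of $J_1$ (viewed as a CAT(0) cube complex via the barycentric subdivision trick recalled at the start of the section) and the hyperplanes of $X$ distinct from $J_1$ and transverse to $J_1$; under this correspondence, $J_1 \cap J_2$ is a hyperplane of $J_1$. Symmetrically, it is also a hyperplane of $J_2$.

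By Claim~\ref{claim:HypProduct}, I would write $J_1 \cong L_1 \times \cdots \times L_r$ and $J_2 \cong M_1 \times \cdots \times M_s$ as products of unbounded quasi-lines. A standard fact is that the hyperplanes of a product CAT(0) cube complex $A_1 \times \cdots \times A_n$ are precisely the subcomplexes of the form $A_1 \times \cdots \times A_{k-1} \times H \times A_{k+1} \times \cdots \times A_n$ where $H$ is a hyperplane of some factor $A_k$. Applied to $J_1$, this yields that $J_1 \cap J_2$ has this form for some hyperplane $H$ of some factor $L_k$. Moreover $H$ is bounded: its two halfspaces are convex and, by essentiality, unbounded; in the two-ended space $L_k$ (which is quasi-isometric to $\mathbb{R}$) they must therefore contain the two distinct ends, which forces $H$ to sit in a bounded middle region. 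Consequently $J_1 \cap J_2$ is quasi-isometric to the product of the $r-1$ remaining factors, and hence to $\mathbb{R}^{r-1}$.

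The symmetric argument performed inside $J_2$ shows that $J_1 \cap J_2$ is also quasi-isometric to $\mathbb{R}^{s-1}$. Since Euclidean spaces of different dimensions are not quasi-isometric (e.g. by comparing asymptotic dimension or the degree of polynomial growth), we conclude $r-1 = s-1$, i.e., $r = s$.

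The main conceptual point, and the only non-trivial step, is the identification of $J_1 \cap J_2$ as a hyperplane of each $J_i$ in its own CAT(0) cube complex structure, together with the recognition of its product-hyperplane shape and the boundedness of the distinguished factor; all these are folklore facts about walls in products that need to be combined carefully. Once they are in place, the argument is purely a dimension count via quasi-isometry invariance.
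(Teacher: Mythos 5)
Your proof is correct and follows essentially the same route as the paper: both identify $J_1\cap J_2$ with a codimension-one ``slice'' in each product decomposition furnished by Claim~\ref{claim:HypProduct} and then compare the number of unbounded factors. You actually supply more detail than the paper does (the product-hyperplane structure, the boundedness of the distinguished factor, and the explicit quasi-isometry invariant), where the paper simply writes the intersection as $\{p\}\times A_2\times\cdots\times A_n=\{q\}\times B_2\times\cdots\times B_m$ and asserts that this forces $n=m$.
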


\noindent
Let $A,B$ be two transverse hyperplanes. According to Claim~\ref{claim:HypProduct}, we can write $A=A_1 \times \cdots \times A_n$ and $B= B_1 \times \cdots \times B_m$ for some convex and unbounded quasi-lines $A_1, \ldots, A_n,B_1, \ldots, B_m \subset X$. Up to reindexing our quasi-lines, we suppose that $A$ crosses $B_1$ and that $B$ crosses $A_1$. Then there exist two points $p \in A_1$ and $q \in B_1$ such that
$$\{p\} \times A_2 \times \cdots \times A_n = A \cap B = \{q\} \times B_2 \times \cdots \times B_m.$$
This equality implies that $n=m$, concluding the proof of our claim.

\medskip \noindent
Claim~\ref{claim:TransverseHyp} shows that all the hyperplanes of a connected component of the crossing graph of $X$ have the same rank. It follows from Lemma \ref{lem:Crossing} that $X$ decomposes as a union of convex subcomplexes $\{X_i \mid i \in I\}$ such that $X_i \cap X_j$ is either empty or a cut vertex for all distinct $i,j \in I$; and such that, for all $i \in I$, $X_i$ has a connected crossing graph and all its hyperplanes have the same rank. Consequently, $G$ acts on the tree whose vertex-set is $I$ and whose edges link $i,j \in I$ if $X_i \cap X_j$ is a cut vertex of $X$. Notice that edge-stabilisers are finite and that vertex-stabilisers are $\{ \mathrm{stab}(X_i) \mid i \in I\}$. Therefore, the proof of Theorem~\ref{thm:Main} reduces to the case where $X$ has a connected crossing graph and all its hyperplanes have the same rank $r \geq 0$. We distinguish several cases depending on the value of $r$.

\medskip \noindent
\textbf{Case 1:} $r=0$. In other words, all the hyperplanes of $X$ are points, i.e. $X$ is a tree. Because the crossing graph of $X$ is connected by assumption, it must be reduced to a single vertex and $G$ must be finite.

\medskip \noindent
\textbf{Case 2:} $r \geq 2$. Here, we want to prove that $G$ is virtually free abelian. We begin by proving the following observation:

\begin{claim}\label{claim:Parallel}
For any two non-transverse hyperplanes $A$ and $B$, there exists a third hyperplane $J$ which is transverse to both $A$ and $B$.
\end{claim}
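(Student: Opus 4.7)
My plan is to exploit the product structure of hyperplanes together with the connectedness of the crossing graph. Since the crossing graph of $X$ is connected by assumption, there is a sequence of hyperplanes $A = H_0, H_1, \ldots, H_n = B$ with each consecutive pair transverse. If $n \leq 2$, the intermediate hyperplane $H_1$ already witnesses the claim, so I focus on reducing the case $n \geq 3$.

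The crux of the argument is the following consequence of $r \geq 2$. For any hyperplane $H = H_1 \times \cdots \times H_r$ of $X$, any hyperplane $K$ transverse to $H$ meets $H$ in a sub-hyperplane of the form $H_1 \times \cdots \times \{k\} \times \cdots \times H_r$, so $K$ has a well-defined \emph{type} in $\{1, \ldots, r\}$ recording which factor of $H$ is ``collapsed''. Two sub-hyperplanes of $H$ of distinct types cross transversely inside $H$, and this crossing lifts to a common cube of $X$ through the corresponding hyperplanes; hence two hyperplanes of $X$ both transverse to $H$ are themselves transverse in $X$ if and only if they have different types with respect to $H$. In particular, along a minimal path, the sub-hyperplanes $H_{i-1} \cap H_i$ and $H_{i+1} \cap H_i$ must share the \emph{same} type in $H_i$, otherwise $H_{i-1}$ and $H_{i+1}$ would be transverse and the path could be shortcut.

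For the base case of a path $A, H_1, H_2, B$ of length $3$, I would consider the intersection $H_1 \cap H_2$, which is convex and itself a product of $r - 1 \geq 1$ unbounded quasi-lines, hence contains hyperplanes. Any such sub-hyperplane extends to a hyperplane $J$ of $X$ whose type with respect to $H_1$ (resp.\ $H_2$) differs from that of $A \cap H_1$ (resp.\ $B \cap H_2$), because $H_1 \cap H_2$ is by construction the product of factors avoiding the ``shared'' direction. The observation above then forces $J$ to be transverse to both $A$ and $B$. For longer paths I would proceed by induction on $n$, applying the length-$3$ construction to an appropriate inner sub-segment to produce a strictly shorter path between $A$ and $B$, iterating until the distance drops to $\leq 2$.

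The main obstacle I anticipate is making the inductive step go through cleanly for $n \geq 4$: the ``shortcut'' hyperplane $J$ constructed from an inner triple is transverse to that triple by design, but I need to verify it can be chosen so that the new shortened path still has $A$ and $B$ as endpoints rather than parallel drift-off hyperplanes. Overcoming this may require strengthening the inductive hypothesis to track the type of the chosen $J$ at each $H_i$, or, alternatively, a more global bridge-theoretic argument: since $A = A_1 \times \cdots \times A_r$ and convex subcomplexes of products of CAT(0) cube complexes are products themselves, the combinatorial projection $g_A(B) \subset A$ decomposes as $F_1 \times \cdots \times F_r$, and hyperplanes transverse to both $A$ and $B$ correspond to hyperplanes of the $F_i$'s; one would then use the connectedness of the crossing graph to rule out that every $F_i$ is a singleton.
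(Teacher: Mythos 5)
Your argument is essentially the paper's: both rest on assigning to each hyperplane transverse to a hyperplane $H = H^1 \times \cdots \times H^r$ the factor that its trace on $H$ collapses, observing that two hyperplanes collapsing \emph{different} factors are transverse (so that along a geodesic of the crossing graph the two neighbours of $H_i$ must collapse the \emph{same} factor of $H_i$), and then using $r \geq 2$ to produce a hyperplane crossing one of the surviving factors of a consecutive intersection. The obstacle you anticipate in the inductive step is not actually there: your length-$3$ construction, applied to the initial sub-path $H_0, H_1, H_2, H_3$ of a geodesic (which is itself a geodesic), produces a hyperplane $J$ adjacent in the crossing graph to both $H_0$ and $H_3$, so the distance between $H_0$ and $H_3$ is at most $2$, contradicting geodesicity. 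Hence the crossing graph has no geodesic of length $\geq 3$ at all, and any two non-transverse hyperplanes are at distance exactly $2$; no induction and no bookkeeping of types along the path is required. The paper sidesteps even this reduction by choosing a single hyperplane $J$ crossing the factor $J_0^2$ of $A$ and propagating it along the whole geodesic: $J$ crosses $J_0 \cap J_1$, hence a factor of $J_1$ other than $J_1^1$, hence $J_1 \cap J_2$, and so on, until it crosses $B$. One small inaccuracy in your write-up: the ``only if'' direction of your type dichotomy (same type implies non-transverse) is false in general, since two hyperplanes whose traces on $H$ are parallel may still cross elsewhere in $X$; but you only ever invoke the ``if'' direction (different types implies transverse), so nothing in your argument breaks.
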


\noindent
Because the crossing graph of $X$ is connected, we can fix a geodesic $J_0, \ldots, J_n$ from $A$ to $B$ in the crossing graph. For every $0 \leq i \leq n$, we decompose $J_i$ as a product of quasi-lines $J_i^1 \times \cdots \times J_i^r$ as given by Claim~\ref{claim:HypProduct}. Up to reindexing our quasi-lines, we suppose that $J_i$ crosses $J_{i-1}^1$ for every $1 \leq i \leq n$. Given an index $1 \leq i \leq n-1$, the hyperplanes $J_{i-1}$ and $J_{i+1}$ cannot be transverse, since otherwise $J_0, \ldots, J_n$ would not be a geodesic; so $J_{i-1}$ and $J_{i+1}$ have to cross the same factor of $J_i$, namely $J_i^1$. Therefore, for every $0 \leq i \leq n-1$, there exist two points $d_i \in J_i^1$ and $g_{i+1} \in J_{i+1}^1$ such that
$$\{d_i\} \times J_i^2 \times \cdots \times J_i^r = J_i \cap J_{i+1} = \{g_{i+1}\} \times J_{i+1}^2 \times \cdots \times J_{i+1}^r.$$
Because $r \geq 2$, there exists a hyperplane $J$ crossing $J_0^2$. Then $J$ crosses
$$\{d_0\} \times J_0^2 \times \cdots \times J_0^r = J_0 \cap J_1 = \{g_1\} \times J_1^2 \times \cdots \times J_1^r,$$
and we deduce that it also crosses
$$\{d_1 \} \times J_1^2 \times \cdots \times J_1^r = J_1 \cap J_2 = \{g_2\} \times J_2^2 \times \cdots \times J_2^r.$$
By iterating the argument, we conclude that $J$ crosses $J_i$ for every $0 \leq i \leq n$. In particular, $J$ crosses $J_0=A$ and $J_n=B$, concluding the proof of our claim.

\medskip \noindent
The combination of Claim~\ref{claim:Parallel} and \cite[Theorem 6.3]{MR2827012} shows that $X$ must decompose as a product of irreducible subcomplexes $X_1 \times \cdots \times X_n$, $n \geq 2$. The desired conclusion follows from the following observation, which we record for future use:

\begin{claim}\label{claim:NotIrreducible}
If $X$ is not irreducible, then $G$ is virtually free abelian.
\end{claim}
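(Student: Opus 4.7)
\medskip\noindent\emph{Proof plan.} My strategy is to show that $X$ splits as a product of $n$ unbounded quasi-lines, and then to invoke quasi-isometric rigidity of $\mathbb{Z}^n$.

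Since $X$ is not irreducible, iterating the splitting given by \cite[Theorem 6.3]{MR2827012} allows me to write $X = X_1 \times \cdots \times X_n$ with $n \geq 2$ and each $X_i$ irreducible. Every hyperplane of $X$ then has the form
\[
H = X_1 \times \cdots \times X_{i-1} \times J_i \times X_{i+1} \times \cdots \times X_n
\]
for some index $i$ and some hyperplane $J_i$ of $X_i$, and by Claim~\ref{claim:HypProduct} such a hyperplane decomposes as a product of $r$ unbounded quasi-lines. I would then invoke the essentially unique decomposition of a CAT(0) cube complex into irreducible factors: since each $X_j$ with $j \neq i$ is an irreducible factor of $H$, it must coincide with one of the quasi-line factors in that decomposition, hence be itself an unbounded quasi-line. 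Because $n \geq 2$, varying $i$ covers every index, so all of $X_1, \ldots, X_n$ are unbounded quasi-lines. A count of factors then forces $J_i$ to be a product of $r - (n-1)$ unbounded quasi-lines; but $J_i$ is a hyperplane of the quasi-line $X_i$ and therefore bounded, which gives $n = r+1$ and $J_i$ reduced to a single point.

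From here, $X = X_1 \times \cdots \times X_n$ is quasi-isometric to $\mathbb{R}^n$, so the geometric action of $G$ on $X$ forces $G$ itself to be quasi-isometric to $\mathbb{Z}^n$. The quasi-isometric rigidity of $\mathbb{Z}^n$, a consequence of Gromov's polynomial growth theorem together with the classification of virtually nilpotent groups of prescribed polynomial degree, then yields that $G$ is virtually $\mathbb{Z}^n$, hence virtually free abelian as required.

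The main subtle point in this plan is the appeal to uniqueness of the irreducible decomposition of $H$. A more concrete alternative would be to embed each $X_j$ with $j \neq i$ as a convex subcomplex of $H$ by fixing coordinates in the other factors, and then verify directly that an irreducible convex subcomplex of a product of unbounded quasi-lines must itself be a single quasi-line; this sidesteps any general uniqueness statement at the cost of a small extra verification.
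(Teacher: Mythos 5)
Your argument is correct and takes essentially the same route as the paper: both identify each irreducible factor $X_j$ inside a hyperplane of the form $X_1 \times \cdots \times J_i \times \cdots \times X_n$, compare this with the quasi-line decomposition of Claim~\ref{claim:HypProduct} via the uniqueness of the decomposition into irreducible factors (this is exactly what the paper's citation of \cite[Proposition~2.6]{MR2827012} supplies), and conclude that $G$ is quasi-isometric to $\mathbb{Z}^n$, whence virtually free abelian by Gromov's polynomial growth theorem together with the growth-degree classification (the paper's references \cite{MR623534, MR741395}). The only difference is that you make explicit the factor count giving $n=r+1$ and the point that $J_i$ is bounded, which the paper leaves implicit.
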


\noindent
Notice that $X$ contains hyperplanes isomorphic to $X_2 \times \cdots \times X_n$ and $X_1 \times \cdots \times X_{n-1}$, so \cite[Proposition~2.6]{MR2827012} implies that $X_1, \ldots, X_n$ are quasi-lines. Therefore, our group $G$ must be quasi-isometric to $\mathbb{Z}^n$, and we conclude from \cite{MR623534, MR741395} that $G$ is virtually free abelian, as desired.

\medskip \noindent
\textbf{Case 3.1:} $r=1$ and $X$ is hyperbolic. If $G$ is virtually free there is nothing to prove, so from now on we assume that $G$ is not virtually free. As a consequence of \cite{MR2146190}, $X$ contains a quasiconvex subspace $Q$ quasi-isometric a hyperbolic plane. Observe that:

\begin{claim}\label{claim:observation}
Let $J$ be a hyperplane. Assume that $\partial Q$ lies in the (Gromov) boundary of a halfspace $J^+$ delimited by $J$. For any two distinct points at infinity $\alpha, \beta \in \partial Q \backslash \{\partial J\}$, every bi-infinite geodesic $\gamma$ between $\alpha$ and $\beta$ lies in $J^+$.
\end{claim}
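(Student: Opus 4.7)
My plan is to exploit the convexity of the halfspaces $J^+$ and $J^-$ in the CAT(0) metric on $X$ in order to pin down the position of $\gamma$. Since $J^+$ and $J^-$ are closed convex subspaces whose union is $X$, the preimages $\gamma^{-1}(J^+)$ and $\gamma^{-1}(J^-)$ are closed intervals of $\mathbb{R}$ whose union is all of $\mathbb{R}$. Hence either $\gamma \subseteq J^+$, which is the desired conclusion, or $\gamma^{-1}(J^-)$ contains an unbounded end, so that one of the two endpoints at infinity, say $\beta$, lies in $\partial J^-$. Combined with the hypothesis $\beta \in \partial J^+$, this would give $\beta \in \partial J^+ \cap \partial J^-$, and the proof therefore reduces to the sub-claim $\partial J^+ \cap \partial J^- \subseteq \partial J$, which contradicts $\beta \notin \partial J$.

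To prove the sub-claim, I fix $\xi \in \partial J^+ \cap \partial J^-$. Using the convexity and closedness of $J^{\pm}$ together with sequences in $J^{\pm}$ realising the convergence to $\xi$, I obtain geodesic rays $\rho_{\pm} \subseteq J^{\pm}$ both converging to $\xi$. Since two rays asymptotic to the same point in a CAT(0) space stay at bounded distance, $d(\rho_+(t), \rho_-(t))$ is bounded by some constant $D$. The segment $[\rho_+(t), \rho_-(t)]$ connects $J^+$ to $J^-$ and so must cross $J$, which yields $d(\rho_+(t), J) \leq D$ for every $t$. The ray $\rho_+$ thus stays in a uniform neighborhood of the convex subspace $J$, and since $X$ is hyperbolic, the closest-point projection of $\rho_+$ to $J$ is a quasi-geodesic in $J$ with the same endpoint at infinity, so $\xi \in \partial J$ as required.

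I expect the sub-claim to be the main obstacle, in particular the final step where hyperbolicity of $X$ enters essentially: a ray trapped in a bounded neighborhood of the convex subspace $J$ must be asymptotic to a ray inside $J$. Everything else is a matter of routine bookkeeping with convex subspaces, their preimages under $\gamma$, and the elementary observation that a segment joining a point of $J^+$ to a point of $J^-$ must cross $J$.
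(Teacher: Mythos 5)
Your proof is correct and follows essentially the same route as the paper: both arguments extract a ray of $\gamma$ on the wrong side of $J$, pair it with an asymptotic ray inside $J^+$, and use the fact that $J$ separates two asymptotic rays at bounded distance to force their common endpoint into $\partial J$. The only cosmetic differences are that you package the key step as the sub-claim $\partial J^+ \cap \partial J^- \subseteq \partial J$ and invoke hyperbolicity at the very end, where it is not actually needed (the projection of a ray trapped in a bounded neighborhood of the closed convex set $J$ already yields $\xi \in \partial J$ in any CAT(0) space).
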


\noindent
Assume that our geodesic $\gamma$ does not lie in $J^+$. Then $\gamma$ contains an infinite ray $\rho_1$ which is disjoint from $J^+$. Consequently, $J$ separates $\rho_1$ from a geodesic ray $\rho_2$ converging to $\rho_1(+\infty) \in \{\alpha,\beta\} \subset \partial J^+$. Because the Hausdorff distance between $\rho_1$ and $\rho_2$ must be finite, $J$ must contain a ray $\rho_3$ converging $\rho_1(+ \infty)$ as well, so $\alpha$ or $\beta$ necessarily belongs to $\partial J$. This concludes the proof of our claim.

\medskip \noindent
Let $Y$ denote the intersection of all the halfspaces containing $\partial Q$ in their boundaries. As a consequence of Claim \ref{claim:observation}, any geodesic between any two distinct points in 
$$S: = \partial Q \backslash \{ \partial J, \text{ $J$ hyperplane}\}$$
lies in $Y$, so $S \subset \partial Y$. Because $X$ contains only countably many hyperplanes and that the boundary of each hyperplane is finite, we know that $S$ has countable cocardinality in $\partial Q$, which is a circle; in particular, $S$ is dense in $\partial Q$. Since $\partial Y$ must be closed in $\partial X$, it follows that $\partial Y$ contains $\partial Q$. Conversely, $Y$ lies in the convex hull of $Q$ (which coincides with the intersection of all the halfspaces containing $Q$), and we know from \cite[Theorem H]{MR2413337} that the Hausdorff distance between $Q$ and its convex hull is finite because $Q$ is quasiconvex, so $\partial Y$ must lie in $\partial Q$. Hence $\partial Y = \partial Q$.

\medskip \noindent
Thus, we have constructed a convex subcomplex $Y$ quasi-isometric to the hyperbolic plane such that every hyperplane of $X$ either is disjoint from $Y$ or separates $\partial Y$. If $Y$ is a proper subcomplex, then there exists a hyperplane in $X$ which is disjoint from $Y$. Because the crossing graph of $X$ is connected, this implies that there exists a hyperplane $A$ disjoint from $Y$ which is transverse to a hyperplane $B$ which crosses $Y$. On the one hand, $B$ separates $\partial Y$ so it contains a quasi-line $\gamma \subset Y$. And on the other hand, $A$ must be disjoint from $\gamma$, so, because $B$ is essential, there must exist points in $B$ which are arbitrarily far away from $\gamma$. This contradicts the fact that $B$ is a quasi-line. Thus, we have proved that $X=Y$ is quasi-isometric to the hyperbolic plane, and we conclude that $G$ contains a finite-index subgroup isomorphic to the fundamental group of a closed surface of genus $\geq 2$ \cite{MR961162, MR1313451, MR1296353, MR1189862}.

\medskip \noindent
\textbf{Case 3.2:} $r=1$ and $X$ is not hyperbolic. According to \cite[Theorem 3.1]{MR4057355}, there exists a combinatorial isometric embedding $\mathbb{R}^2 \to X$ where $\mathbb{R}^2$ is endowed with its canonical square tessellation. Let $F$ denote the image of such an embedding. If the convex hull of $F$ is a proper subcomplex in $X$, then there must exist a hyperplane of $X$ which does not cross $F$. Because the crossing graph of $X$ is connected, this implies that there exists a hyperplane $A$ disjoint from $F$ which is transverse to a hyperplane $B$ which crosses $F$. On the one hand, $B \cap F$ is a quasi-line; and on the other hand, $A$ must be disjoint from $\gamma$, so, because $B$ is essential, there must exist points in $B$ which are arbitrarily far away from $\gamma$. This contradicts the fact that $B$ is a quasi-line. Thus, we have proved that $X$ is the convex hull of $F$. As a consequence, the crossing graph of $X$ is obtained from the crossing graph of $\mathbb{R}^2$ (which is an infinite bipartite complete graph) by adding new edges. It follows from \cite[Lemma 2.5]{MR2827012} that $X$ is not irreducible, and we conclude from Claim \ref{claim:NotIrreducible} that $G$ is virtually free abelian.

\medskip \noindent
By putting together all the cases above, we conclude that our group $G$ decomposes as a graph of groups such that the edge-groups are finite and the vertex-groups are either virtually free abelian or virtually surface groups. Notice that $G$ contains a finite-index subgroup $G' \leq G$ which is torsion-free. (For instance, this follows from the fact that $G$ is residually finite \cite[Proposition II.2.12]{MR1954121}.) Such a subgroup then decomposes as a graph of groups such that the edge-groups are trivial and the vertex-groups are either virtually free abelian or virtually surface groups; in other words, $G'$ is a free product of virtually free abelian groups and virtually surface subgroups. Because the factors of such a free product are separable, we conclude that $G'$ (and so $G$) contains a finite-index subgroup which decomposes as a free product of free abelian groups and surface groups.
\end{proof}

\addcontentsline{toc}{section}{References}

\bibliographystyle{alpha}
{\footnotesize\bibliography{CCflatHyp}}

\Address

%

\end{document}